 \theoremstyle{plain}
\newtheorem{thm}{Theorem}
\newtheorem{cor}{Corollary}
\theoremstyle{definition}
\newtheorem{rem}{Remark}
\newtheorem{prob}{Problem}
\newtheorem{quest}{Question}
\newtheorem{ack}{Acknowledgements}
\newtheorem{conj}{Conjecture}
\date{\vspace{-5ex}} 
\newcommand{\C}{{\mathbb C}} \newcommand{\Q}{{\mathbb Q}} \newcommand{\Z}{{\mathbb Z}} 
 \newcommand{\N}{{\mathbb N}}
\newcommand{\Span}{\operatorname{span}}
\newcommand{\abs}[1]{{\left| {#1} \right|}}
\renewcommand{\Re}{\operatorname{Re}}
 \newcommand{\Norm}{\operatorname{Norm}}
\author{Johan Andersson\thanks{Email:johan.andersson@his.se \, Address:School of Engineering science, University of Sk\"ovde, Box 408, 541 28 Sk\"ovde, SWEDEN}}
\title{On  questions of Cassels and Drungilas-Dubickas}
\begin{document}
   \maketitle
 
\begin{abstract}
  We answer a question of Drungilas-Dubickas in the affirmative under the assumption of standard conjectures on smooth numbers in polynomial sequences. This gives evidence against the ``Dubickas Conjecture'', which Ka{\v{c}}inskait{\.e} and Laurin{\v{c}}ikas  proved implies universality results for the Hurwitz zeta-function with certain algebraic irrational parameters.

Under these standard conjectures we also  prove some results that confirms observations of Worley relating to a problem of Cassels on the  multiplicative dependence of  algebraic numbers shifted by integers.
\end{abstract}

\section{Universality results for the Hurwitz zeta-function}

The classical universality result for the Hurwitz zeta-function (see e.g. \cite{GarLau}) says that
\begin{thm}Let $0<\alpha<1/2$ or $1/2<\alpha<1$ be either a transcendental number or a rational number  and let $K$ be some compact set with connected complement lying in the strip $K \subset \{s \in \C:1/2<\Re(s)<1 \}$, and suppose that $f$ is any continuous function on $K$ that is analytic  in the interior of $K$. Then 
$$\liminf_{T \to \infty} \frac 1 T \mathop{\rm meas} \left \{t \in [0,T]:\max_{z \in K} \abs{\zeta(z+it,\alpha)-f(z)}<\varepsilon \right \}>0. $$
\end{thm}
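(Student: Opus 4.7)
The approach I would take follows the classical Bagchi-Voronin probabilistic framework. Let $H(D)$ denote the space of functions analytic on $D = \{s \in \C : 1/2 < \Re(s) < 1\}$ with the topology of uniform convergence on compacta. The first step is a limit theorem in $H(D)$: show that the family of measures
$$P_{T,\alpha}(A) = \frac{1}{T}\,\mathop{\rm meas}\{t \in [0,T] : \zeta(\cdot+it,\alpha) \in A\}$$
converges weakly as $T \to \infty$ to a probability measure $P_\alpha$. I would realise $P_\alpha$ as the distribution of the random Dirichlet series $\zeta(s,\alpha,\omega) = \sum_{n=0}^{\infty} \omega_n (n+\alpha)^{-s}$ on the torus $\Omega = \prod_{n\geq 0}\{z \in \C : \abs{z}=1\}$ equipped with product Haar measure; the weak convergence then reduces to Kronecker-Weyl equidistribution of the orbit $t \mapsto ((n+\alpha)^{-it})_{n \geq 0}$ in $\Omega$.

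The second, and arithmetically deepest, step is to show that the support of $P_\alpha$ is rich enough to contain approximations of arbitrary target functions analytic on $K$. For $\alpha$ transcendental, the real numbers $\log(n+\alpha)$, $n \geq 0$, are linearly independent over $\Q$: a nontrivial relation $\sum c_n \log(n+\alpha) = 0$ with $c_n \in \Q$ exponentiates to $\prod(n+\alpha)^{c_n}=1$, an algebraic equation forcing $\alpha$ to be algebraic. This independence, combined with a Hilbert-space Pechersky-type rearrangement theorem applied to the random series above, yields that the support of $P_\alpha$ is all of $H(D)$. For $\alpha = p/q$ rational, I would use the decomposition
$$\zeta(s,p/q) = \frac{q^s}{\varphi(q)}\sum_{\chi \bmod q}\overline{\chi}(p)\, L(s,\chi)$$
to reduce to the joint universality of Dirichlet $L$-functions modulo $q$, a classical result of Voronin; because several $L$-functions with independent characters are combined linearly, the non-vanishing restriction inherent to an individual $L$-function drops out and the support again covers every $f$ of the required form.

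With the denseness of the support in hand, the final step applies Mergelyan's theorem, justified by the hypothesis that $\C \setminus K$ is connected, to approximate $f$ uniformly on $K$ by a polynomial; weak convergence via the Portmanteau theorem then transfers positivity of the $P_\alpha$-mass of the sup-norm $\varepsilon$-neighbourhood of $f$ on $K$ into the claimed positive $\liminf$. The main obstacle is the support-denseness step: the Hilbert-space rearrangement argument must be arranged so that it actually reconstructs arbitrary analytic targets from the random series $\sum \omega_n (n+\alpha)^{-s}$, which in both the transcendental and rational regimes is where the arithmetic input is used in an essential way. The exclusion $\alpha \neq 1/2$ reflects the identity $\zeta(s,1/2) = (2^s-1)\zeta(s)$, whose Euler product constrains the support of the limit measure to non-vanishing functions and so defeats the unrestricted statement above.
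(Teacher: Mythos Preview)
The paper does not actually supply a proof of this theorem: it is quoted as the ``classical universality result'' with a reference to \cite{GarLau}, and the only accompanying text is the two-item list of arithmetic ingredients (linear independence of $\log p$ over $\Q$ in the rational case, linear independence of $\log(n+\alpha)$ over $\Q$ in the transcendental case). So there is no paper-proof to compare against, only those hints.

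Your sketch is the standard Bagchi--Voronin argument and is consistent with those hints. In the transcendental case your independence argument for $\log(n+\alpha)$ is exactly the one the paper alludes to; in the rational case your reduction to joint universality of Dirichlet $L$-functions via $\zeta(s,p/q)=\frac{q^s}{\varphi(q)}\sum_{\chi}\overline{\chi}(p)L(s,\chi)$ is again the route the paper has in mind when it says the independence of $\log p$ ``is used to prove joint universality for the Dirichlet $L$-functions''. One point you pass over quickly deserves a word of caution: the claim that ``the non-vanishing restriction \ldots\ drops out'' is correct but not automatic. Joint universality only lets you hit tuples $(f_\chi)$ of \emph{non-vanishing} targets, and you must still argue that a suitable choice of such non-vanishing $f_\chi$ produces, via the linear combination $\sum_\chi \overline{\chi}(p)f_\chi$, an approximation to an arbitrary (possibly vanishing) $f$ on $K$; this is where one typically invokes a further Mergelyan-type step or an explicit construction, and it is the genuinely delicate part of the rational case. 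Apart from that, the outline is sound and matches the literature the paper cites.
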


If $\alpha=1$ or $\alpha=1/2$ it follows from the Voronin universality for the Riemann zeta-function that the same results are true if $f$ is assumed to be nonvanishing on $K$. One of the main open problems in the theory of Universality of $L$-functions and zeta-function is to prove the same result also whenever $\alpha$ is an algebraic irrational
\begin{conj}
  Theorem 1 is true for all $0<\alpha<1/2$ and $1/2<\alpha<1$
\end{conj}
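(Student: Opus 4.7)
My approach follows the probabilistic framework of Bagchi, which has been pushed to its natural limits for rational and transcendental $\alpha$ and reduces the problem to a denseness statement in a space of analytic functions. First I would establish a limit theorem in the space $H(D)$ of functions analytic on the strip $D = \{s \in \C : 1/2 < \Re(s) < 1\}$ for the family of vertical translates $\zeta(s + i\tau, \alpha)$: as $\tau$ varies over $[0,T]$ the distributions converge weakly to an explicit measure $P_\alpha$, realized as the law of a random Dirichlet series $\sum_{n=0}^{\infty} (n+\alpha)^{-s} e^{i\theta_n}$ with $(\theta_n)$ chosen from a suitable closed subgroup of the infinite torus. By Mergelyan's theorem the conclusion of Theorem~1 follows once one shows that the topological support of $P_\alpha$ contains every function analytic on a neighbourhood of $K$.

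Second, I would attack this support statement via an orthogonality argument in a weighted Bergman space: by Hahn--Banach, one reduces to showing that no nonzero complex measure supported on $K$ annihilates every $(n+\alpha)^{-s}$. The standard way to conclude is to establish that $\{\log(n+\alpha)\}_{n \ge 0}$ is linearly independent over $\Q$, which is automatic when $\alpha$ is transcendental. When $\alpha$ is rational one instead decomposes into Dirichlet $L$-functions and invokes Voronin's joint universality. The genuinely unresolved case is that of algebraic irrational $\alpha$, where the needed linear independence is precisely the Dubickas conjecture that Ka{\v{c}}inskait{\.e} and Laurin{\v{c}}ikas exploited.

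This is where the main obstacle lies, and where the present paper creates a serious difficulty: the evidence collected here points \emph{against} the Dubickas conjecture, so one should not expect full $\Q$-linear independence of the numbers $\log(n+\alpha)$. A successful proof will therefore have to either extract a linearly independent subsystem large enough to force denseness in $H(D)$ despite the multiplicative relations among shifted algebraic numbers that this paper investigates, or replace the orthogonality step entirely by some new analytic input --- perhaps a mean-square or zero-density estimate specifically adapted to algebraic parameters --- that sidesteps the framework above. I expect this to be the hardest step, and, barring a breakthrough on the precise Diophantine structure of the set $\{n + \alpha : n \ge 0\}$, I do not see how to remove the obstacle within current technology.
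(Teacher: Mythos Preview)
The statement you are addressing is Conjecture~1, and the paper does not prove it: it is explicitly left open, with the author remarking that ``some substantially new ideas are required for a proof.'' So there is no proof in the paper to compare your proposal against.

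That said, your proposal is not really a proof either, and you are candid about this. What you have written is an accurate summary of the Bagchi--Voronin framework (limit theorem in $H(D)$, support identification via Hahn--Banach and Mergelyan, reduction to $\Q$-linear independence of $\log(n+\alpha)$), together with a correct diagnosis of where it stalls for algebraic irrational $\alpha$. Your observation that the paper's own results undercut the Dubickas conjecture --- and hence the linear-independence input one would want --- is exactly the point the paper is making. Your closing sentence, that you ``do not see how to remove the obstacle within current technology,'' is in complete agreement with the paper's assessment.

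In short: there is no gap to flag and no alternative route to compare, because neither you nor the paper offers a proof. Your write-up is a fair account of why Conjecture~1 is open.
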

\noindent The arithmetical results needed to prove Theorem 1 are 
\begin{enumerate}
  \item For $\alpha$ rational: the fact that $\log p$ are linearly independent over  $\Q$. This is used to prove joint universality for the Dirichlet $L$-functions, where      the primes comes from taking logarithms of the $L$-functions. 
  \item For $\alpha$ transcendental. This is somewhat easier since we do not have to take the logarithm and it is sufficient to use the fact that $\log(n+\alpha)$ are linearly independent over  $\Q$.   
\end{enumerate}
When $\alpha$ is  algebraic irrational the problem becomes much harder, since we are likely to have a lot of linear relations between $\log(n+\alpha)$. While we still believe that Conjecture 1 is true, it is our opinion that some substantially new ideas are required for a proof.

We should remark that in some related problem the case of $\alpha$ algebraic irrational has been covered. With respect to the value-distribution of $\zeta(s,\alpha)$ this can be found in \cite{GarLau}. Also  Laurin{\v{c}}ikas and Steuding  \cite{LauSteu} proved that $\zeta(s+it,\alpha)$ admits a limit distribution in the space of analytic functions on $K$. However this is much easier to do and do not imply Theorem 1, since we also have to show that any 
continuous function on $K$, analytic in its interior is contained in this limit distribution.

 \section{Multiplicative dependence of shifted algebraic numbers}
  We say that a set $M \subset \C $ \,  is multiplicative dependent (see Dubickas \cite{Dub}) if there exists distinct $x_1,\ldots,x_n \in  M$  and integers $k_1,\ldots,k_n$ not all zero such that   \begin{gather*}
	\prod_{j=1}^n x_j^{k_j}=1.
 \end{gather*}
If $M$ is not multiplicative dependent we will say that $M$ is multiplicative independent. 
Let  the natural numbers be defined by $\N=\{n \in \Z:n \geq 0\}$, and write the set of shifted natural numbers as $\N+ \alpha =\{n+\alpha:n \in \N \, \}$. It is clear that linear dependence is related to multiplicative dependence.  In particular we see that $\{\log(n+\alpha): n \in \N \, \}\cup\{2\pi i\}$ is linearly dependent over $\Q$ \, if and only if $\N+ \alpha$ is multiplicative dependent. The reason why we adjoin $2\pi i$ to the set is because for complex number field case the logarithm is not well defined. When we adjoin $2 \pi i$ we may choose any branch of the logarithm, for example the principal part. It is easy to see that $\N+\alpha$ is multiplicative independent if $\alpha$ is
 transcendental, since any multiplicative relation 
$$ \prod_{j=1}^m (\alpha+n_j)^{k_j}=1$$
readily gives us a polynomial with integer coefficents, that has $\alpha$ as a zero.

One of the classical results in the field is the result of Cassels \cite{Cassels}, which asserts that we can find a multiplicative independent set of algebraic irrational numbers shifted by natural numbers $M \subset \N+\alpha$ of density $0.51$. A different way to state this is the following:
$$
  \dim \Span_\Q \{n+\alpha: 0 \leq n < x \} \, \geq \, 0.51x+o(x).
$$
 Cassels used this result to show that the Hurwitz zeta function has zeros in any strip  $1<\Re(s)<1+\delta$ for any algebraic irrational parameter $\alpha$. Since this result was previously proved  for transcendental and rational parameters by Davenport-Heilbronn \cite{DavHeil}, this showed that the result was true for any parameter $\alpha$. Worley  \cite{Worley} used a variant of Cassels method to prove that $0.51$ in Cassels result may be replaced by $1-\frac 1 {2d}+O(d^{-3/2})$, where $d$ is the degree of the number field. This improves on Cassels bound for large degree $d$. Drungilas-Dubickas \cite{DubDrun} asked the following question. 

\begin{quest}
 Is the set $\N+\alpha$ multiplicative dependent for every algebraic number $\alpha$?  
\end{quest}

In \cite{Dub}, Dubickas proved that the question has an affirmative answer for the case of quadratic number fields. In \cite{DubDrun} Drungilas-Dubickas  proved the same result for cubic number fields if the algebraic numbers are shifted by rationals instead of natural numbers. These results give some evidence that the answer to the question might be {\em Yes} for arbitrary number fields

While Drungilas-Dubickas did not phrase the question as a conjecture in \cite{DubDrun}, Ka{\v{c}}inskait{\.e} \cite{Kac} and  Ka{\v{c}}inskait{\.e}-Laurin{\v{c}}ikas \cite{KacLau}  stated and called the following conjecture the Dubickas conjecture:
\begin{conj}
 There exists some algebraic number $\alpha$ such that $\N+\alpha$ is multiplicative independent.
\end{conj}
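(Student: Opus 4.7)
Conjecture~2 is equivalent to exhibiting a monic irreducible $q\in\Z[x]$ of degree $\geq 2$ that divides none of the polynomials
\[
  P_{n,k}(x) \;:=\; \prod_{k_j > 0}(x+n_j)^{k_j} \;-\; \prod_{k_j < 0}(x+n_j)^{|k_j|},
\]
taken over all tuples of distinct $n_1,\dots,n_m \in \N$ and integers $k_1,\dots,k_m$ not all zero. Each $P_{n,k}$ is nonzero (the two products are monic with disjoint sets of roots), and Galois invariance of the relation $\prod_j(\alpha+n_j)^{k_j}=1$ forces $q_\alpha \mid P_{n,k}$. So the task is to build an algebraic $\alpha$ (equivalently $q_\alpha$) avoiding all countably many such divisibility conditions simultaneously.

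My first attempt would be a quantitative Hilbert-irreducibility-style diagonal argument. Parameterize relations by complexity $C := \max_j n_j + \max_j|k_j|$; then $\deg P_{n,k}$ and its coefficients are bounded explicitly in $C$, and the number of relations of complexity $\leq C$ is at most $C^{O(C)}$. Fixing a degree $d$ and height $H$, one expects: the irreducible $q\in\Z[x]$ of degree $d$ and height $\leq H$ number $\gg H^d$, while the $q$ arising as divisors of some $P_{n,k}$ with complexity $\leq C$ number $\ll C\cdot C^{O(C)}$. Choosing $C = C(H)\to\infty$ with $C(H)^{O(C(H))} = o(H^d)$ (for instance $C(H) = \log H/\log\log H$) leaves a surviving $q$ of height $\leq H$ at every stage. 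The hope is to promote a compactness/coherence argument on these survivors to a single $q$ that avoids relations of every complexity.

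An alternative, more arithmetic route is to leverage the $S$-unit structure of $\mathcal{O}_K$. A relation $\prod_j(\alpha+n_j)^{k_j}=1$ decomposes into an ideal identity $\prod_j(\alpha+n_j)^{k_j}=(1)$ and a unit constraint in $\mathcal{O}_K^\times$. Choosing $\alpha$ inside a number field $K$ with controlled class group, small unit rank, and favorable prime splitting (say, with a fixed rational prime totally inert in $K$) restricts the possible supports of hypothetical relations via the local observation that a prime ideal $\mathfrak{p}$ over $p$ can divide $\alpha+n$ only if $\alpha\bmod\mathfrak{p}\in\mathbb{F}_p$. One then hopes to reduce hypothetical relations to a finite linear system on the $k_j$ whose inconsistency is verifiable by direct arithmetic inspection.

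\emph{Main obstacle.} Both the admissible $\alpha$ and the excluded set are countable, so the first approach demands a delicate coherence step---extracting a single $\alpha$ from a sequence of height-$H$ survivors---that is not obviously available; and the bounds $H(C)$ for roots of $P_{n,k}$ grow super-polynomially in $C$, making the gap in the counting argument thin. Much more seriously, the smooth-number heuristics invoked elsewhere in the paper predict that $\Norm_{K/\Q}(\alpha+n)$ is smooth at a sufficiently dense set of $n$ for \emph{every} algebraic $\alpha$, which forces some multiplicative relation in the resulting $S$-unit group. Refuting this prediction for even one $\alpha$ amounts to ruling out smooth values in the polynomial sequence $q_\alpha(-n)$---a problem well beyond current reach in analytic number theory. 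A proof of Conjecture~2 therefore seems to require a genuinely new transcendence-theoretic input, for example an effective lower bound on $\sum_j k_j\log(\alpha+n_j)$ uniform in the $(n_j,k_j)$ for some specially chosen algebraic $\alpha$.
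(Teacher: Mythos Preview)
The paper contains no proof of Conjecture~2. It is stated there as an open conjecture (the ``Dubickas conjecture''), and the thrust of the paper runs in the opposite direction: Theorem~3 shows that Conjecture~4 --- itself a weak consequence of the standard smooth-values Conjecture~3 --- implies that Conjecture~2 is \emph{false}. There is therefore no proof in the paper to compare your proposal against.

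Your proposal is not a proof either. You sketch two strategies and then, correctly, locate the gap in each. In the counting approach, the ``coherence step'' --- passing from a height-$H$ survivor that avoids relations of complexity $\leq C(H)$ to a single $\alpha$ avoiding relations of \emph{all} complexities --- is precisely the missing idea; nothing in the sketch supplies it, and no compactness is available since the admissible $\alpha$ form a countable discrete set. In the $S$-unit approach you produce no actual field or $\alpha$, and the local splitting constraints you describe restrict the support of a hypothetical relation without excluding one. Your final paragraph is the honest assessment: the smooth-number heuristics invoked in the paper predict multiplicative dependence for \emph{every} algebraic $\alpha$, so establishing Conjecture~2 would amount to refuting those heuristics for some $\alpha$, which is beyond current methods.

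Two minor corrections to the setup: the minimal polynomial $q_\alpha$ of a general algebraic number need not be monic, so ``monic'' should be dropped; and degree~$2$ is already excluded unconditionally by Dubickas's result (Corollary~1 in the paper), so any candidate $\alpha$ must have degree at least~$3$.
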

Thus despite the results of Dubickas for quadratic number fields, the Dubickas conjecture is that the answer to Question is {\em No}. For such algebraic numbers $\alpha$,  Ka{\v{c}}inskait{\.e} and Laurin{\v{c}}ikas  used standard methods to prove Theorem 1, Voronin Universality for the Hurwitz zeta-function. At the moment there are no known values of algebraic irrational numbers where Theorem 1 is known to hold.

\section{The Number field sieve}

Mathematicians familiar with the number field sieve might be suspicious of Conjecture 2. These types of relations are exactly what is used in the number field sieve in order to factor integers fast. 
While not proven rigorously the number field sieve is expected to be fast, and there is quite a lot of computational evidence. The number field sieve even requires us to find similar relations for two different polynomials. In particular, in one standard variant of the number field sieve, see e.g. Pomerance \cite{Pomerance}, one of the polynomials chosen is exactly $f(n)=\alpha+n$. 
If Conjecture 2 is true, there would exist some integer and choice of polynomials where the  sieving part of the number field sieve would never finish.
 While our problem is simpler since just one polynomial is required, knowledge of the number field sieve gives us good intuition about the problem at hand.

Similarly to the number field sieve, to find the relations we just consider the algebraic numbers where $\alpha+n$ is without large prime factors, and discard the rest of the numbers. 
This is what is done in the sieving step of the number field sieve. Among these remaining numbers we should be able to, and can under some standard conjectures, find the relations needed to answer our Question.

\section{Smooth numbers}

A smooth number is an integer without large prime factors. Dickman \cite{Dickman} proved that
\begin{gather*}
 \Psi(x,y) \sim \rho(u) x, \qquad (x=y^u),
\end{gather*}
where $\Psi(x,y)$ denote the number of positive integers less than $x$ with all prime factors less than or equal to $y$.  The Dickman-function $\rho(u)$  can be defined for $u \geq 0$ by the initial values $\rho(u)=1$ for $0\leq u \leq 1$ and by the difference-differential equation 
$$
 \rho'(u)=-\frac{\rho(u-1)} u,
$$
for $u>1$. In particular we see that $\rho(u)=1-\log u$ for $1 \leq u \leq 2$. The problem of determining the number of smooth numbers in various sets have many application in analytic and computational number theory, especially at analyzing how fast methods for factorization, such as the Number Field Sieve are expected to work. Good surveys of this area are for example Baker \cite{Baker}, Hildebrand-Tenenbaum \cite{HilTen} and Granville \cite{Granville}. Similarly to prime density estimates we expect that if there is no reason otherwise, we should have the same density of smooth numbers in polynomial sequences  as amongst the natural numbers. This can be quantified by the following conjecture.
\begin{conj} Let $P \in \Z[x]$ be an irreducible polynomial of degree $d$ and let $\Psi_P(x,y)$ be the number of integers in the set  $\{P(n):1 \leq n < x\}$ with all prime factors less or equal to $y$. Then
 $$ \Psi_P(x,y) \sim \rho(du) x, \qquad (x=y^u).$$
\end{conj}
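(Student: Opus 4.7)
The plan is to mimic the classical proof of Dickman's theorem, noting that an irreducible $P$ of degree $d$ produces values $P(n)$ of size $\asymp x^d$ as $n$ ranges over $[1,x]$. Under the heuristic that $P(n)$ behaves like a random integer of its own size, the probability of $y$-smoothness should be $\rho(v)$ where $v = \log(x^d)/\log y = du$, which is exactly the right-hand side of the conjecture. So the whole game is to justify this random-integer heuristic.

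Concretely, I would set up a Buchstab-type identity: any $P(n)$ that fails to be $y$-smooth has a prime factor $p>y$, which one peels off and recurses. For each prime $p \in (y,x^d]$, the integers $n \in [1,x]$ with $p \mid P(n)$ fall into a union of $\rho_P(p) \leq d$ residue classes modulo $p$, where $\rho_P(p) = \#\{n \bmod p : P(n) \equiv 0 \pmod p\}$. The Chebotarev density theorem gives $\sum_{p \leq z}\rho_P(p) \sim \pi(z)$ (since $P$ irreducible means the Galois action on its roots has one orbit, so the average number of fixed points is $1$). Hence on average each prime $p$ contributes a fraction $1/p$ of the relevant values of $n$, just as for genuinely random integers of size $x^d$. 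Inserting this into the recursion, renormalising $u \mapsto du$, and passing to a continuous limit should recover precisely the difference-differential equation $v\rho'(v) = -\rho(v-1)$ defining $\rho$, producing the asymptotic $\rho(du)\,x$.

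The main obstacle, and the reason this must be stated as a conjecture rather than proved, is the requirement of \emph{uniform} equidistribution of $P(n)$ in arithmetic progressions to moduli $p$ up to $x^{d-\varepsilon}$, with good enough error terms to survive summation over $p$ and iteration of the recursion. For $d=1$ the equidistribution is trivial and the argument collapses to the theorem of Dickman and de Bruijn. For $d \geq 2$ one needs a Bombieri--Vinogradov-type estimate for the sequence $\{P(n)\}$ with exponent of distribution exceeding $1/d$, which lies well beyond current sieve technology once $d \geq 3$. Partial results in the spirit of Hildebrand, Balog--Ruzsa and Fouvry--Tenenbaum handle restricted ranges of $u$, and the extreme case $u$ close to $1$ is essentially the Bateman--Horn problem of counting primes in $P(n)$; but the full asymptotic across the whole range $u \geq 1$ genuinely requires new arithmetic input, which is why we have to take it as a hypothesis rather than a theorem.
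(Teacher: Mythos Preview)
The statement you are asked to prove is Conjecture~3 in the paper, and the paper offers no proof of it whatsoever: it is presented purely as a conjecture, attributed to Martin, with a remark that Martin gave numerical evidence and established it conditionally on a quantitative Schinzel Hypothesis~H only in the restricted range $y>x^{d-1+\varepsilon}$, and that unconditionally it is known only for linear $P$. There is therefore nothing in the paper to compare your argument against.

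Your proposal is not a proof either, and you say so yourself. What you have written is a coherent heuristic: $|P(n)|\asymp n^d$, so a random-integer model predicts $y$-smoothness with probability $\rho(d\log x/\log y)=\rho(du)$; the Buchstab recursion together with the Chebotarev-type average $\sum_{p\le z}\rho_P(p)\sim\pi(z)$ makes this plausible and would formally reproduce the delay equation for $\rho$. Your identification of the genuine obstruction --- needing equidistribution of $\{n:p\mid P(n)\}$ for primes $p$ up to $x^{d-\varepsilon}$, i.e.\ an exponent of distribution exceeding $1/d$ --- is correct and is exactly why the statement remains conjectural for $d\ge 2$. This matches the paper's assessment that Martin's conditional range $y>x^{d-1+\varepsilon}$ (equivalently $du<d/(d-1)$) is the current limit even under strong hypotheses. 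So your write-up is an honest and essentially accurate account of why one believes the conjecture and why one cannot prove it; just be clear that it is a heuristic motivation, not a proof, which is all the paper itself claims.
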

This is a special case of a more general conjecture for several polynomials of Martin \cite{Martin} (see also \cite[p 5]{Granville}). Martin gave numerical evidence supporting this Conjecture and  managed to prove\footnote{It was proved unconditionally that it is of the right order of magnitude in this range by Martin\cite{Martin}. For the several polynomial case, Dartyge-Martin-Tenenbaum \cite{DarMarTen} proved the corresponding result.} it under a quantitative version of Shinzel's hypothesis $H$\footnote{This  is a very strong conjecture on prime values of polynomials, currently far outside of our reach.}  in the limited range $y>x^{d-1+\epsilon}$.  Unconditionally, this conjecture has only been proved for linear polynomials.

For our purposes we will state a weaker conjecture
\begin{conj}
 For any non-constant polynomial $P \in \Z[x]$  we have that
 $$\limsup_{x,y \geq 2} \frac{\Psi_P(x,y) \log y} y=\infty.$$
\end{conj}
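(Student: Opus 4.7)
\medskip

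The plan is to deduce Conjecture 4 directly from Conjecture 3 (and, in the reducible case, from Martin's several-polynomial generalization of it). The point is that Conjecture 3 predicts a positive proportion $\rho(du)$ of $y$-smooth values among $P(1),\ldots,P(\lfloor x\rfloor)$ whenever $x=y^u$, whereas making the ratio $\Psi_P(x,y)\log y/y$ arbitrarily large only requires on the order of $y/\log y$ smooth values of $P$, which is far fewer than $x$ as soon as $u>1$.

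Concretely, suppose first that $P$ is irreducible of degree $d$. Fix any $u>1$, for instance $u=2$, and evaluate the quantity along the curve $x=y^u$. Conjecture 3 replaces $\Psi_P(x,y)$ by $\rho(du)\,y^u$, leaving a factor $y^{u-1}\log y$ which tends to infinity as $y\to\infty$, since $u>1$ and $\rho$ is strictly positive on $[0,\infty)$. This exhibits a sequence of pairs $(x,y)$ along which the expression is unbounded, so the limsup is infinite. For reducible $P=\prod_i P_i^{a_i}$ one uses that $P(n)$ is $y$-smooth if and only if every irreducible factor $P_i(n)$ is $y$-smooth, and invokes the corresponding special case of Martin's conjecture (producing $\prod_i\rho(d_iu)$ in place of $\rho(du)$); the same computation then closes the argument.

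The main obstacle is of course not this derivation but Conjecture 3 itself. Unconditionally, only the linear case is available: for $P(n)=n+c$ the count $\Psi_P(x,y)$ differs from $\Psi(x,y)$ by a bounded amount, and the classical Dickman--de Bruijn asymptotic yields Conjecture 4 immediately. For degrees $d\geq 2$ the known unconditional lower bounds for $\Psi_P(x,y)$ are confined to specific polynomials (such as $n^2+1$) or to very narrow ranges of $y$, typically with a loss of $x^\epsilon$, so an unconditional proof of Conjecture 4 in full generality would require genuinely new input on smooth values of polynomials in the moderately smooth regime $y=x^{1/u}$, $u>1$.
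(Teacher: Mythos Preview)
Your derivation is correct and follows the same basic strategy as the paper's proof of Theorem~2: feed Conjecture~3 into the ratio along a diagonal $x=y^{u}$ and use that $\rho$ is strictly positive. The only difference is the choice of parameter. The paper takes the simplest option $u=1$, obtaining
\[
\frac{\Psi_P(x,x)\log x}{x}\ \gg\ \rho(d)\,\log x \ \longrightarrow\ \infty,
\]
whereas you take $u>1$ and get the faster-growing lower bound $\rho(du)\,y^{u-1}\log y$. Either works; the paper's choice is marginally cleaner since it avoids introducing an auxiliary $u$.

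One point where you are actually more careful than the paper: Conjecture~3 as stated applies only to \emph{irreducible} $P$, while Conjecture~4 is asserted for all non-constant $P$. The paper's one-line proof of Theorem~2 silently ignores this gap; you close it by invoking Martin's several-polynomial version to produce $\prod_i \rho(d_i u)$ in the reducible case. For the paper's purposes this does not matter, since the norm polynomial $P(n)=\Norm(n\beta+\gamma)$ arising in the application is always irreducible, but it is a genuine logical lacuna in Theorem~2 as written, and your fix is the natural one.
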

\begin{thm}
 Conjecture 3 implies Conjecture 4.
\end{thm}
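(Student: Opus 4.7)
The plan is to combine Conjecture 3 applied to a single irreducible factor of $P$ with the elementary fact that an integer of absolute value at most $y$ is automatically $y$-smooth. First I would reduce to the case where $P$ is squarefree: writing $P = c\prod_i P_i^{a_i}$ with distinct irreducible factors $P_i$ and forming $\tilde P = \prod_i P_i$, we have $\Psi_P(x,y) = \Psi_{\tilde P}(x,y)$ for all $y$ larger than the primes dividing $c$, so only the squarefree radical matters for the $\limsup$ as $y \to \infty$.

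If $\tilde P$ is itself irreducible of degree $d$, Conjecture 3 applied with $x = y^u$ for any fixed $u > 1$ yields
$$\frac{\Psi_P(y^u,y)\log y}{y} \sim \rho(du)\, y^{u-1}\log y \to \infty \qquad (y \to \infty),$$
since $\rho$ is strictly positive. Otherwise write $\tilde P = P_1 Q$, where $P_1$ is a selected irreducible factor of degree $d_1$ and $Q$ has degree $\delta = d - d_1$. Choose $x = (y/C)^{1/\delta}$, with $C$ an absolute constant controlling $|Q(n)|/n^\delta$. For $n < x$, $|Q(n)| \leq y$, so $Q(n)$ is trivially $y$-smooth, and hence $\tilde P(n)$ is $y$-smooth whenever $P_1(n)$ is. Conjecture 3 for $P_1$ then gives $\Psi_{\tilde P}(x,y) \geq \Psi_{P_1}(x,y) \sim \rho(d_1/\delta)\, x$, so
$$\frac{\Psi_P(x,y)\log y}{y} \gtrsim \rho(d_1/\delta)\, y^{1/\delta - 1}\log y,$$
which diverges as $y \to \infty$ precisely when $\delta \leq 1$, i.e.\ when the removed factor $Q$ is linear.

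The principal obstacle is the case where $P$ has two or more distinct irreducible factors all of degree $\geq 2$, so that $\delta \geq 2$ for every choice of $P_1$ and the bound above tends to $0$ rather than $\infty$. Here the trivial smoothness of the companion factor is lost, and one needs genuine joint smoothness of several polynomial values $P_i(n)$ simultaneously, the content of Martin's general several-polynomial conjecture of which Conjecture 3 is only the single-polynomial special case. A Bonferroni-type bound $\Psi_P(x,y) \geq x\bigl(\sum_i \rho(d_i u) - (r-1)\bigr) + o(x)$ is positive only in a range of $u$ below the trivial-smoothness threshold, where the factor $y^{u-1}\log y$ is small, so a careful parameter balance seems required to push through the unboundedness. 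I expect this joint-smoothness step to be the main technical difficulty, possibly resolvable by iterating the trivial-smoothness trick over nested subproducts of $P$.
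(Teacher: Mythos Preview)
Your argument for irreducible $P$ is exactly the paper's proof, up to a cosmetic choice of parameter: the paper simply sets $x=y$ (i.e.\ $u=1$), obtains $\Psi_P(x,x)/x\gg\rho(d)>0$, and concludes $\Psi_P(x,x)\log x/x\gg\log x\to\infty$. That is the entire proof in the paper; there is nothing more to it.

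Everything you write about the reducible case goes beyond what the paper does. The paper's proof, taken literally, only establishes Conjecture~4 for \emph{irreducible} $P$: it invokes Conjecture~3 directly on $P$, which is only stated for irreducible polynomials. You have correctly noticed the mismatch between the hypotheses of Conjecture~3 (irreducible) and Conjecture~4 (any non-constant polynomial), and your analysis of why the single-polynomial version of Martin's conjecture does not obviously suffice when $P$ has two or more irreducible factors of degree $\geq 2$ is sound. But the paper simply does not address this; in the downstream applications (Theorems~3 and~5) the polynomial $P$ is the norm form \eqref{Norm}, which is explicitly irreducible, so the reducible case is never actually needed. In short: your core argument matches the paper, and the ``principal obstacle'' you identify is a genuine gap in the \emph{statement} of Theorem~2 rather than a difficulty you were expected to overcome.
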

\begin{proof}  Choose $x=y$ in Conjecture 3.
We see that
$$
\frac{\Psi_P(x,x)} x \gg \rho(d) \gg 1,
$$
for fixed $d$ by the fact that the Dickman-function $\rho(d)>0$ is strictly positive. Thus we get that
$$
 \frac {\Psi_P(x,x) \log x} x \gg \log x, \qquad  \text{and} \qquad
\limsup_{x,y \geq 2} \frac{\Psi_P(x,y) \log y} y=\infty.$$
\end{proof}

\section{Smooth numbers and the Dubickas conjecture}

While Conjecture 2 deals with smoothness of integer values of polynomials, it is intimately connected with smoothness of the algebraic numbers $n +\alpha$. 
 We will let $\displaystyle \alpha=\frac{\gamma}{\beta}$ where $\beta,\gamma$ are co-prime algebraic integers. This allows us to deal with algebraic integers $n\beta+\gamma$ rather than algebraic numbers $n+\alpha$, since
\begin{gather} \label{bgdef}
  n+\alpha =\frac{n \beta+\gamma} {\beta}.  
\end{gather}
The norm of $n\beta+\gamma$ will be
\begin{gather} \label{Norm} \Norm(n \beta+\gamma)= P(n), \end{gather}
 for some  irreducible polynomial $P$ in $\Z[x]$, and if $P(n)$ is smooth and have prime factors less than or equal to $y$ then for any prime factorization of the algebraic integer
\begin{gather} \label{fact}
  n \beta+\gamma =u_1^{k_1} \cdots u_r^{k_r} q_{k_1}^{n_1} \ldots q_{k_m}^{n_m}.
\end{gather}
where the norm of a prime element $q_{k_i}$ is a power of some prime less than or equal to $y$.
Here $q_j$ are non-associated prime elements in the algebraic number field,  $u_1,\ldots,u_r$ are units that generates the group of units and $k_i,n_j$ are 
natural numbers. The study of the group of units is an interesting topic. 
However, in our case it will be sufficient to use that it is finitely generated. Except the more complicated structure of the group of units,  the main difference between the classical integer case
 and the number field case is that prime factorization is not neccessarily unique. In our application, this will not be a problem, rather the opposite, since non uniqueness of prime factorization 
will only lead to more relations, rather than fewer.

\begin{thm}
 Conjecture 4 and thus also Conjecture 3 implies an affirmative answer to Question and that Conjecture 2 is false. 
\end{thm}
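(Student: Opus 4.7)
The plan is to use Dirichlet's $S$-unit theorem to bound the rank of a finitely generated group that contains every $n+\alpha$ for which $P(n)$ is $y$-smooth, and then to invoke Conjecture 4 to produce strictly more such $n$ than this rank, forcing a nontrivial multiplicative relation.

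Let $K=\Q(\alpha)$, of degree $d$. For each $y\ge 2$ let $S_y$ denote the union of the archimedean places of $K$, the finite places dividing $(\beta)$, and all finite places of $K$ lying above rational primes $\le y$. Since each rational prime $p\le y$ lies below at most $d$ prime ideals of $\mathcal O_K$, we have $|S_y|\le d\pi(y)+C$ for some constant $C$ depending only on $K$ and $\beta$, and Dirichlet's $S$-unit theorem gives that the group of $S_y$-units $U_y=\mathcal O_{K,S_y}^*$ is finitely generated with torsion-free rank $|S_y|-1\le d\pi(y)+C$. For any $n\in\N$ such that $P(n)$ is $y$-smooth, \eqref{Norm} says that the ideal $(n\beta+\gamma)$ has norm $P(n)$, so every prime ideal dividing $(n\beta+\gamma)$ lies above a rational prime factor of $P(n)$, hence above a prime $\le y$, and therefore belongs to $S_y$. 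By \eqref{bgdef} the fractional ideal $(n+\alpha)=(n\beta+\gamma)(\beta)^{-1}$ is likewise supported on $S_y$, so $n+\alpha\in U_y$ (and $n+\alpha\ne 0$ since $\alpha$ is irrational).

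By Conjecture 4 we may choose $x,y\ge 2$ with $\Psi_P(x,y)\log y/y$ as large as we wish; since $\pi(y)\log y/y\to 1$, we may in particular arrange $\Psi_P(x,y)>d\pi(y)+C$. Then the $\Psi_P(x,y)$ distinct elements $n+\alpha$ (with $1\le n\le x$ and $P(n)$ $y$-smooth) outnumber the torsion-free rank of $U_y$, so their images in $U_y/U_y^{\mathrm{tors}}\cong\Z^{|S_y|-1}$ satisfy a nontrivial $\Q$-linear relation; clearing denominators produces integers $k_j$ not all zero and distinct $n_j$ such that $\prod_j (n_j+\alpha)^{k_j}$ equals a root of unity $\omega\in K$. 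Raising to the (finite) order of $\omega$ gives a nontrivial multiplicative dependence in $\N+\alpha$, which answers Question 1 affirmatively and disproves Conjecture 2. The main obstacle is really just recognising that $S$-units are the appropriate framework; once that is done, the rank bound is Dirichlet's theorem and the numerics are easy, because Conjecture 4 is vastly stronger than needed --- even an eventual $\liminf\ge d+\varepsilon$ for $\Psi_P(x,y)\log y/y$ would suffice.
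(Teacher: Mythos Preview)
Your proof is correct and follows the same counting strategy as the paper: bound the rank of a group containing all the $y$-smooth values $n+\alpha$, then use Conjecture~4 to produce more smooth values than that rank and deduce a relation. The paper builds an explicit $\Q$-spanning set for the logarithms, namely $\{2\pi i,\log\beta\}$ together with the logs of a set of unit generators and of the prime elements of norm a power of a prime $\le y$, and bounds the latter by Landau's prime ideal theorem; the complex-log ambiguity is absorbed into $2\pi i$. You instead package the whole thing into Dirichlet's $S$-unit theorem, with $S_y$ consisting of the archimedean places, the places dividing $\beta$, and the places above primes $\le y$, and absorb the torsion by raising the resulting root of unity to its order. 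The two bounds are of the same order $y/\log y$, so the numerics match. Your formulation has the minor advantage that it is indifferent to whether $\mathcal O_K$ is a UFD, whereas the paper's language of ``prime elements'' in the factorization \eqref{fact} is most natural in the principal-ideal case; conversely, the paper's explicit spanning set makes the connection to $\Span_\Q\{\log(n+\alpha)\}$ (used later in Theorem~5) more transparent.
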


\begin{proof}
Let the algebraic integers $\beta,\gamma$ and the polynomial $P$ be defined by Eq. \eqref{bgdef} and Eq. \eqref{Norm} respectively. From the Landau prime ideal  theorem for number fields \cite{Landau} it follows that the number of  prime elements (up to multiplication with units)  with norm equal to a prime power of a prime less 
than or equal to $y$ in a number field,  is  strictly less than $C\frac y{\log y}$ for $y \geq 2$ and some $C>0$ depending on the field.  By Conjecture 4 we can choose some sufficiently large $x$ and $y$  such that
\begin{gather} \label{Orlow}
   \Psi_P(x,y) > C \frac y {\log y} +r+3,
\end{gather}
where $r$ denotes the cardinality of a minimal set of units that generates the group of units.
 Consider the vector space $V$ over $\Q$ spanned by the set
$$M= \{2 \pi i\} \cup \{\log(n+\alpha):0 \leq n < x \land P^+(P(n)) \leq y\}.$$
Here $P^+(m)$ denote the largest prime factor of $m$. It is  clear that the cardinality \begin{gather} \label{Orlow2} 
\# M= \Psi_P(x,y)+\delta \geq \Psi_P(x,y),
\end{gather}
where $\delta \in \{0,1\}$. Since by Eq. \eqref{bgdef} and Eq \eqref{fact} each element in $V$ can also be written as a linear combinations of the logarithm of prime elements and logarithms of the units, $\log \beta$ and $2 \pi i$, 
a set of vectors $B$ that spans a space $W$ such that $V \subset W$ can be chosen as 
$$
 B = \{2 \pi i,\log \beta\} \cup\{ \log u_1,\ldots,\log u_r,\log q_1,\ldots,\log q_{m} \}. 
$$
Here $q_1,\ldots,q_m$ are  the non-associated prime elements with norm a power of a prime that is less than or equal to $y$. It is clear that $m <  C \frac y {\log y}$. The cardinality of $B$ is hence certainly
 less than $C \frac y {\log y}+r+2$. Thus we have that $\dim_\Q V \leq \dim_\Q W < C \frac y {\log y}+r+2$. Since $M \subset V$ by Eqs. \eqref{Orlow},\eqref{Orlow2} contains more elements than the dimension of $V$ there must exists some nontrivial linear relation between the elements in $M$ and thus they are linearly dependent over $\Q$ and the set $\{(n+\alpha): 0 \leq n < x:P^+(P(n))<y\}\subset \N+\alpha$ and also $\N+\alpha$ is  multiplicative dependent. 
\end{proof}

\begin{rem}
  The limited range of $y$ that Martin \cite{Martin} proved Conjecture 3 under Shinzel's hypothesis H, does not help us to prove Theorem 3. Thus even under this very strong hypothesis, the answer to Question does not follow. 
\end{rem}

The following corollary is a result of Dubickas \cite{Dub}. While he used elegant explicit constructions involving Pell's equation, we will give a different more indirect proof and show how his result follows from the theory we developed.
\begin{cor}
  The answer to Question if $\alpha$ is a quadratic irrational is {\em Yes}.
\end{cor}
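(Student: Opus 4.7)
The plan is to apply Theorem 3 to the irreducible quadratic polynomial $P(x)=\Norm_{K/\Q}(x\beta+\gamma)$, where $K=\Q(\alpha)$ is the quadratic field containing $\alpha$. By that theorem it suffices to verify the conclusion of Conjecture 4 for this particular $P$, namely that $\limsup_{x,y\geq 2}\Psi_P(x,y)\log y/y=\infty$; once this is in hand, the theorem produces a nontrivial $\Q$-linear relation among $\{2\pi i\}\cup\{\log(n+\alpha):n\in S\}$ for some finite $S\subset\N$, which translates directly into a nontrivial multiplicative relation $\prod_{n\in S}(n+\alpha)^{k_n}=1$ of the kind required.

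The crux of the argument is therefore to establish, unconditionally, the smoothness bound for irreducible quadratic $P$. I would attempt this via the classical sieve-theoretic work on smooth values of binary quadratic forms, starting from Hooley's work on $n^2+1$ and the half-dimensional sieve of Iwaniec and Deshouillers--Iwaniec, together with the extension to arbitrary irreducible quadratic $P$ developed by Balog, Dartyge and others. The desired output is a constant $\theta=\theta(P)<1$ for which a positive proportion of $n\leq x$ satisfies $P^{+}(P(n))\leq x^{\theta}$, i.e.\ $\Psi_P(x,x^{\theta})\gg x$. Substituting $y=x^{\theta}$ then gives
\[
 \frac{\Psi_P(x,y)\log y}{y}\;\gg\;\theta\,x^{1-\theta}\log x\;\longrightarrow\;\infty,
\]
so the hypothesis of Theorem 3 is satisfied and the corollary follows at once.

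The main obstacle is securing the unconditional smoothness statement $\Psi_P(x,x^{\theta})\gg x$ for every irreducible quadratic $P$, not merely for the much-studied polynomial $n^2+1$: the sieve technology is essentially the same, but one has to check that the implied constants can be made uniform across the polynomials arising as norm forms $\Norm_{K/\Q}(x\beta+\gamma)$, taking into account the discriminant of $K$ and the splitting behaviour of small rational primes in $K$. Once that input is secured, no further work is required, as Theorem 3 is already formulated in exactly the shape we want to apply, and the entire corollary reduces to an application of the existing machinery of Section 5.
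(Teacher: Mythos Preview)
Your overall plan --- verify Conjecture~4 for the specific quadratic norm polynomial $P$ and then invoke Theorem~3 --- is exactly the route the paper takes. The divergence is in the smoothness input you aim for. You ask for a positive-proportion estimate $\Psi_P(x,x^{\theta})\gg x$ with some $\theta<1$; this is strictly stronger than what is needed and, to my knowledge, is \emph{not} known unconditionally for arbitrary irreducible quadratics (indeed even the case $\theta=1$, i.e.\ $\Psi_P(x,x)\gg x$, is open). So the ``main obstacle'' you flag is a real one, and the sieve results you cite do not deliver it in the required generality.

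The paper avoids this entirely by using a weaker, off-the-shelf bound: Dartyge--Martin--Tenenbaum \cite[Theorem~1.3]{DarMarTen} gives, for any irreducible quadratic $P$,
\[
  \Psi_P(x,x)\;\gg_\beta\;\frac{x}{(\log x)^{\beta}}\qquad(\beta>\log 4-1).
\]
Taking $\beta=\tfrac12$ and $y=x$ already yields $\Psi_P(x,x)\log x/x\gg(\log x)^{1/2}\to\infty$, which is all Conjecture~4 demands. The point is that Conjecture~4 tolerates a loss of any power of $\log x$ off a positive proportion, so there is no need to chase $\theta<1$ or positive density; the sub-logarithmic DMT bound at $y=x$ suffices and applies uniformly to every irreducible quadratic norm form. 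Once you feed this into Theorem~3 the corollary is immediate, with no case-by-case verification of discriminants or splitting behaviour required.
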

\begin{proof}
 Dartyge-Martin-Tenenbaum \cite[Theorem 1.3]{DarMarTen} proved a result which for one quadratic polynomial $P$ and $x=y$ specializes to
 $$
   \Psi_P(x,x) \gg_\beta \frac{x}{(\log x)^{\beta}}, \qquad (\beta>\log 4-1).
$$
Let us choose $\beta=1/2$ which is admissible since $1/2>\log 4-1=0.386$.
We see that 
$$\frac{\Psi_P(x,x) \log x} x \gg \sqrt{\log x}. $$
Thus
$$\limsup_{x,y \geq 2} \frac{\Psi_P(x,y) \log y} y =\infty,$$
and Conjecture 4 is true for the quadratic number field case. Now, by Theorem 3 we know that Conjecture 4 for quadratic polynomials implies an affirmative answer to Question in the quadratic number field case.
\end{proof}
In a similar but simpler way, we can also prove the case of $\alpha$ a rational number. In fact Conjecture 3 is known to hold for polynomial of degree 1 (see \cite{Granville}).

\section{Quantitative measures of dependence for shifted algebraic numbers}

The same reasoning as in the proof of Theorem 3 will in fact lead to a plausible conjecture for the dimension of $\Span_\Q \{\log(n+\alpha):0\leq n \leq x\}$.
\begin{conj}
 Let $\alpha$ be an algebraic number of degree $d \geq 2$. Then
  $$\dim  \Span_\Q \{\log(n+\alpha),0 \leq n < x\}\sim (1-\rho(d))x.$$
\end{conj}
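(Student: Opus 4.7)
The plan is to apply Conjecture 3 in the diagonal regime $y=x$, which supplies $\Psi_P(x,x) \sim \rho(d)x$ for the polynomial $P(n) = \Norm(n\beta+\gamma)$ of \eqref{Norm}, and then to extract matching upper and lower bounds on the dimension.

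For the upper bound, set $S = \{0 \leq n < x : P^+(P(n)) \leq x\}$, so $|S| \sim \rho(d)x$. For each $n \in S$ the factorisation \eqref{fact} expresses $\log(n+\alpha)$ inside the $\Q$-span of
$$B_x = \{2\pi i,\, \log\beta,\, \log u_1,\ldots,\log u_r\} \cup \{\log q : q \text{ a prime element of } K,\ N(q) \leq x\},$$
which by the Landau prime ideal theorem has cardinality $O(x/\log x)$. The remaining $x - |S| \sim (1-\rho(d))x$ logarithms contribute at most themselves, so the total dimension is at most $(1-\rho(d))x + O(x/\log x)$.

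For the lower bound, I would exhibit a subset $T' \subset T := \{0,\ldots,x-1\} \setminus S$ of size $(1-\rho(d))x - o(x)$ on which the $\log(n+\alpha)$ are $\Q$-linearly independent. For each $n \in T$ pick a rational prime $p_n > x$ with $p_n \mid P(n)$; since $P$ has the root $\bar n \pmod{p_n}$, there is a prime ideal $\mathfrak{p}_n$ above $p_n$ of residue degree one dividing $(n\beta+\gamma)$. This works whenever $p_n$ is coprime to the discriminant of $K$ and to $N(\beta)$, excluding only $O(1)$ values of $n$; call the survivors $T'$. The three key properties are: (i) $N(\mathfrak{p}_n) = p_n > x$; (ii) the $\mathfrak{p}_n$ for $n \in T'$ are pairwise distinct, because $\mathfrak{p}_n = \mathfrak{p}_{n'}$ would force $p_n \mid (n-n')$, impossible for $0 < |n-n'| < x < p_n$; (iii) for every $n \in S$, $\mathfrak{p}_{n_0} \nmid (n\beta+\gamma)$ since smoothness bounds every prime ideal divisor of $(n\beta+\gamma)$ in norm by $x$.

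From any relation $\sum c_n \log(n+\alpha) = 0$ with $c_n \in \Z$ we obtain $\prod(n\beta+\gamma)^{c_n} = \beta^{\sum c_n}$; evaluating $v_{\mathfrak{p}_{n_0}}$ for $n_0 \in T'$ then kills all terms except $c_{n_0} v_{\mathfrak{p}_{n_0}}(n_0\beta+\gamma)$, modulo $O(1)$ residual contributions from $n \in T \setminus T'$ that are rendered harmless after further excluding $O(1)$ choices of $n_0$, forcing $c_{n_0} = 0$. The relation space therefore has dimension at most $x - |T'| + o(x) = \rho(d)x + o(x)$, yielding the matching lower bound. The main obstacle is Conjecture 3 itself in the diagonal range $y = x$: unconditionally it is open for every degree $d \geq 2$, and even in the quadratic case the bound of \cite{DarMarTen} is only of order $x/(\log x)^{1/2}$, sufficient for Conjecture 4 but far too crude to produce the sharp coefficient $1 - \rho(d)$.
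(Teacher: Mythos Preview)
Your upper bound argument is exactly the paper's: both feed the smooth $n$ into the span of $\{2\pi i,\log\beta,\log u_i,\log q_j\}$, bound that span by the Landau prime ideal theorem, and add in the non-smooth $n$ one at a time. Nothing to add there.

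For the lower bound you take a genuinely different route from the paper. The paper (Theorem~5) works with prime \emph{elements} $q$ of large norm and appeals to unique factorisation to conclude that the $\log q_i$ are $\Q$-linearly independent; this is why Theorem~4 carries the hypothesis that $\mathcal{O}_K$ be a UFD. You instead attach to each non-smooth $n$ a prime \emph{ideal} $\mathfrak{p}_n$ above a rational prime $p_n>x$ and kill coefficients by evaluating the $\mathfrak{p}_{n_0}$-adic valuation of the multiplicative relation. Since ideal factorisation is always unique, your argument needs no UFD assumption, and in fact resolves what the paper poses as Problem~1 (remove the UFD hypothesis from Theorems~4 and~5). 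So your approach buys strictly more.

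Two small simplifications. First, the residue-degree-one claim is irrelevant: any prime ideal $\mathfrak{p}_n\mid(n\beta+\gamma)$ lying over $p_n$ works, because all you use is $p_n>x$ and $p_n\nmid N(\beta)$. Second, the excision of $T\setminus T'$ and the ``residual contributions'' are unnecessary. Once $x>|N(\beta)|$, the divisibility $\mathfrak{p}_{n_0}\mid(n\beta+\gamma)$ for \emph{any} $n\neq n_0$ in $\{0,\dots,x-1\}$ forces $\mathfrak{p}_{n_0}\mid(n-n_0)\beta$, hence $p_{n_0}\mid(n-n_0)$, which is impossible; so $v_{\mathfrak{p}_{n_0}}$ already annihilates every term with $n\neq n_0$, smooth or not. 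You may therefore take $T'=T$ and drop the $o(x)$ bookkeeping. With these cosmetic changes your conditional derivation of Conjecture~5 from Conjecture~3 is clean and strictly stronger than the paper's.
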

Worley \cite{Worley} mentioned that a limited amount of computing indicates that $1-\rho(2)$ is the correct limit for the case when $\alpha^2+\alpha+1=0$, and that $1-\rho(3)$ is a plausible limit for the case when $\alpha^3+\alpha^2+\alpha+2=0$.

Conjecture 5 improves on Cassels and Worley's results. For example, in the quadratic number field case Cassels gives a lower bound of $0.51x$ in Conjecture 5, while Conjecture 5 implies the estimate $x\log 2=0.71x$. Similarly for large degree $d$ known asymptotics for the Dickman-function \cite[Eq 1.6]{Granville} gives us the lower bound $1-d^{-d+o(d)}$ which is certainly better than Worley's lower bound $1-\frac 1 {2d}+O(d^{-3/2})$.

 While we do not have an unconditional proof of  Conjecture 5 
we will however manage to prove this under Martin's conjecture, Conjecture 3, and some assumption on the number field.

\begin{thm}
  Conjecture 3 implies Conjecture 5, whenever the ring of integral elements in the number field generated by $\alpha$ is a Unique Factorization Domain.
\end{thm}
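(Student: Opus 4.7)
The plan is to show $\dim_\Q V = (1-\rho(d))x + o(x)$, where $V = \Span_\Q\{\log(n+\alpha) : 0 \le n < x\}$, by proving matching upper and lower bounds, each obtained by invoking Conjecture~3 at the natural scale $y = x$ (i.e.\ $u=1$), which yields $\Psi_P(x,x) \sim \rho(d)x$. Under the UFD hypothesis, unique factorization expresses each $\log(n\beta+\gamma)$ as a $\Q$-linear combination of logarithms of prime elements, of a system of fundamental units $u_1,\ldots,u_r$, and of $2\pi i$; Baker's theorem on linear forms in logarithms guarantees that these basis elements are themselves $\Q$-linearly independent, so $\dim V$ equals the $\Q$-rank of the prime-valuation matrix $M = (v_q(n\beta+\gamma))_{n<x,\,q}$ up to an additive $O(1)$ correction coming from $\log\beta$ and the units.

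For the upper bound I would reuse verbatim the spanning argument used in the proof of Theorem~3, now applied at $y = x$. The space $V$ is contained in the $\Q$-span of $\{2\pi i, \log\beta, \log u_1, \ldots, \log u_r\}$, of $\{\log q : q \text{ prime},\ \Norm(q) \le x\}$ (of cardinality $O(x/\log x) = o(x)$ by Landau's prime ideal theorem), and of $\{\log(n\beta+\gamma) : n < x,\ P^+(P(n)) > x\}$ (of cardinality $x - \Psi_P(x,x)$). Adding these contributions and inserting Conjecture~3 yields $\dim V \le (1-\rho(d))x + o(x)$.

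For the lower bound I would aim to prove the complementary estimate $\operatorname{nullity}(M) \le \rho(d)x + o(x)$. A nullvector $(c_n)_{n<x}$ records a multiplicative identity $\prod (n\beta+\gamma)^{c_n}$ lying in the group of units; projecting onto the sub-block $M_{>x}$ of columns indexed by primes of norm exceeding $x$ annihilates every smooth row, so (up to error $o(x)$) any relation splits into one supported entirely on the $\Psi_P(x,x) \sim \rho(d)x$ smooth indices, plus one forcing $\Q$-dependence among the non-smooth rows of $M_{>x}$. The first contribution is directly controlled by Conjecture~3.

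The delicate ingredient, and the main obstacle, is the second contribution: showing that the non-smooth rows of $M_{>x}$ admit only $o(x)$ independent $\Q$-relations. Two distinct non-smooth $n,n'$ share a common large prime divisor $q$ only if the rational prime $p_q \ge x^{1/d}$ lying below $q$ divides $n-n'$, so each large prime is shared by at most $O(x^{1-1/d})$ of the $n$'s and the ``overlap graph'' on non-smooth indices is sparse. I would try to convert this sparsity into a near-full-rank statement by a sieve-theoretic / second-moment argument, comparing $\Psi_P$ at two nearby scales $y = x$ and $y = x^{1-\varepsilon}$ via Conjecture~3 in order to show that only $o(x)$ non-smooth $n$ have their entire large-prime support already realized by some earlier $m < n$. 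Combined with the upper bound this yields $\dim V \sim (1-\rho(d))x$, as desired.
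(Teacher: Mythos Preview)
Your upper bound is exactly the paper's argument, and it is correct.

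The lower bound, however, is drastically overcomplicated because you have thrown away the decisive piece of information. The non-smoothness condition is $P^+(P(n))>x$: there is a \emph{rational} prime $p>x$ dividing $\Norm(n\beta+\gamma)$. In a UFD this forces some prime element $q$ with $p_q=p>x$ (not merely $p_q\ge x^{1/d}$) to divide $n\beta+\gamma$. Now if the same $q$ divided a second $n'\beta+\gamma$ with $0\le n'<x$, then $q\mid(n-n')\beta$; taking norms, $p$ would divide $(n-n')^d\,\Norm(\beta)$, impossible since $p>x>|n-n'|$ and $p>x>|\Norm(\beta)|$ for $x$ large. Hence each non-smooth $n$ owns a prime element $q_n$ appearing in \emph{no other} row of your valuation matrix. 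That is a pivot column for every non-smooth row, so those rows are $\Q$-linearly independent outright, and $\dim V\ge (1-U-\varepsilon)x$ with $U=\rho(d)$ under Conjecture~3. No sparsity/second-moment argument, no comparison of two scales, and no residual $o(x)$ defect to control.

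Your perceived ``main obstacle'' arises solely from replacing the true bound $p_q>x$ by the much weaker $p_q\ge x^{1/d}$ (which is all you get from $\Norm(q)>x$ for an arbitrary column of $M_{>x}$). With the correct bound the overlap graph on non-smooth indices is not just sparse --- it is empty. Also, Baker's theorem is unnecessary: the $\Q$-linear independence of $\{\log q_i\}\cup\{\log u_j\}\cup\{2\pi i\}$ is immediate from unique factorization, since any relation exponentiates to an equality of prime-power products times a unit.
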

The number fields studied by Worley have class number one, so their ring of integers is a Unique Factorization Domain. Thus this result gives some evidence in favor of the observation of Worley. Theorem 4 is an immediate consequence of the following  Theorem.
\begin{thm}
  Suppose that for the polynomial $P$ defined by Eq. \eqref{Norm} we have that
  \begin{gather*}
     \liminf_{x \to \infty} \frac{\Psi_P(x,x)} x=B. \\
\intertext{Then}
       \dim \Span_\Q \{\log(n+\alpha):0 \leq n <x  \} \leq (1-B)x+o(x). \\ \intertext{If furthermore the ring of integers of the number field generated by $\alpha$ is a Unique Factorization Domain and}
 \limsup_{x \to \infty} \frac{\Psi_P(x,x)} x =U, \\ 
 \intertext{Then we have the corresponding lower bound}
       (1-U)x+o(x) \leq \dim \Span_\Q \{\log(n+\alpha):0 \leq n < x \}. 
\end{gather*}
\end{thm}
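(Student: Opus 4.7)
The plan is to split $\{0,1,\dots,x-1\}$ into the $x$-smooth indices $S(x)=\{n:P^+(P(n))\leq x\}$ and the non-smooth complement $L(x)$, with $|S(x)|=\Psi_P(x,x)$, and analyze the contribution of each part to $V:=\Span_\Q\{\log(n+\alpha):0\leq n<x\}$.  Using $\log(n+\alpha)=\log(n\beta+\gamma)-\log\beta$ together with the factorization of the algebraic integer $n\beta+\gamma$ into units and prime elements, the smooth indices will land in a subspace of dimension $O(x/\log x)$ by Landau's prime ideal theorem (as in the proof of Theorem 3), while the non-smooth indices will contribute almost linearly independently thanks to a ``large prime is hit at most once'' phenomenon.

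For the upper bound, fix $\varepsilon>0$; the $\liminf$ hypothesis gives $|L(x)|\leq(1-B+\varepsilon)x$ for all sufficiently large $x$.  For $n\in S(x)$ the ideal $(n\beta+\gamma)$ factors into prime ideals $\mathfrak p$ whose underlying rational prime is $\leq x$, and raising to the $h$-th power (with $h$ the class number) makes each such $\mathfrak p$ principal, supplying $O(x/\log x)$ fixed element generators by Landau's bound.  These, together with a finite set of fundamental units, $2\pi i$ and $\log\beta$, span a $\Q$-subspace $W_1$ of dimension $O(x/\log x)$ containing every $\log(n+\alpha)$ with $n\in S(x)$.  Hence
$$\dim V\leq\dim W_1+|L(x)|\leq(1-B+\varepsilon)x+O(x/\log x),$$
and sending $\varepsilon\to 0$ yields the claim.

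For the lower bound, assume the UFD hypothesis and fix $\varepsilon>0$; the $\limsup$ hypothesis gives $|L(x)|\geq(1-U-\varepsilon)x$.  Using UFD, factor $n\beta+\gamma=u_n\prod_q q^{e_{q,n}}$; for $n\in L(x)$ at least one ``large'' prime element $q$ with $\Norm(q)=p^k$, $p>x$, must appear.  The key observation is that any such large $q$ divides $n\beta+\gamma$ for at most one $n\in[0,x)$: if $q\mid n_i\beta+\gamma$ for $i=1,2$, then $q\mid(n_1-n_2)\beta$; $(\beta,\gamma)=1$ rules out $q\mid\beta$, so $q\mid(n_1-n_2)$, hence $p\mid(n_1-n_2)$, which forces $n_1=n_2$ since $|n_1-n_2|<x<p$.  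Projecting $\log(n+\alpha)$ modulo $W_1$ into the $\Q$-span of logarithms of large prime elements (a genuine direct summand under UFD, since multiplicative relations among distinct prime elements and independent fundamental units are impossible), the image for $n\in L(x)$ is supported exactly on the large prime divisors of $n\beta+\gamma$; by the observation these supports are pairwise disjoint across $n\in L(x)$, so the $|L(x)|$ images are $\Q$-linearly independent, giving
$$\dim V\geq|L(x)|\geq(1-U-\varepsilon)x,$$
and $\varepsilon\to 0$ completes the proof.

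The central technical step is the ``hit at most once'' observation, which is the only place the coprimality of $\beta$ and $\gamma$ is used; the rest is bookkeeping via Landau's theorem and the finite generation of the unit group.  The UFD hypothesis enters essentially only in the lower bound, where we need large prime \emph{elements} (rather than just ideals) so that the formal ``large prime'' coordinates are genuinely independent in $\C$; without UFD the $h$-th power trick used in the upper bound still applies but costs a factor of $h$ in the final constant, losing the sharp $1-\rho(d)$.
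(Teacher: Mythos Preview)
Your proof is correct and follows essentially the same strategy as the paper's: split the indices into $x$-smooth and non-smooth, use Landau's prime ideal theorem to confine the smooth contribution to a subspace of dimension $O(x/\log x)$, and for the lower bound invoke the ``large prime hits at most once'' observation to show that the non-smooth $\log(n+\alpha)$ are $\Q$-linearly independent. The one noteworthy variation is in the hit-once step: you rule out $q\mid\beta$ via the coprimality $(\beta,\gamma)=1$, whereas the paper instead assumes $x>|\Norm(\beta)|$ and argues by norms---your version is slightly cleaner since it does not require this auxiliary size assumption (though for large $x$ it is automatic).
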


\begin{proof}
Let the algebraic integers $\beta,\gamma$ and the polynomial $P$ be defined by Eq. \eqref{bgdef} and Eq. \eqref{Norm} respectively.  In this proof we call an algebraic integer smooth if all its 
prime elements that occurs in some factorization \eqref{fact} have norm a power of a prime less than or equal to $x$.

{\em Upper bound:} 
 We use the same proof method as we used to prove Theorem 3. By the conditions of the theorem, for any $\epsilon>0$ and sufficiently large $x$ we have that
$$\Psi_P(x,x)\geq (B-\epsilon)x.$$
Now there are at most $C\frac x {\log x}$ number of non-associated prime elements in the number field with norm a power of some prime less than $x$, for some $C>0$. By similar reasoning as in the proof of Theorem 3, where we also need to consider  $2\pi i$  and $\log \beta$ respectively, this
 means that dimension of the space that is spanned by the smooth numbers $\log(n\beta+\gamma)$ with $0 \leq n <x$ is no more that $C\frac x {\log x}+r+1$, and 
the dimension of the space that is spanned by the corresponding numbers $\log(n+\alpha)$ with $0 \leq n <x$  is no more that $C\frac x {\log x}+r+2$. The rest of the numbers $n \beta+\gamma$ that are not smooth  are at most $(1-B+\epsilon)x$. This gives us the following inequality
$$
 \dim \Span_\Q \{\log(n+\alpha):1 \leq n < x\} \leq (1-B+\epsilon)x+\frac {Cx} {\log x}+r+2.
$$
Since $\epsilon$ is arbitrarily small this completes the proof of the upper bound. 

{\em Lower bound:} 
By the conditions of the Theorem we have for any $\epsilon>0$ and sufficiently large $x$ (here it is convenient to assume that $ \left |\Norm(\beta) \right |<x$) that
 $$
           \Psi_P(x,x) \leq (U+\epsilon)x.
  $$
 This means that there are at least $(1-U-\epsilon)x$ non-smooth numbers among the integers $P(n)$ where $0 \leq n < x$ and thus also among the algebraic integers $n\beta+\gamma$ where $0 \leq n <x$. Let us now assume that $P(n)$ has a prime $p>x$.
 That $p|P(n)$ implies that $q|(n\beta+\gamma)$ for some prime element $q$ in the number field of norm $p^k$. However such a prime element $q$ must divide just one number $(n\beta+\gamma)$ for $0 \leq n < x$, 
because if $q|(n_1 \beta+\gamma)$ and $q|(n_2 \beta+\gamma)$ where $0 \leq n_1<n_2<x$ then $q|((n_2 \beta+\gamma)-(n_1 \beta +\gamma))$ and $q| (n \beta)$, where $n=n_2-n_1$. This is not possible since
 $1 \leq n<x \leq p$ and $0< \left|\Norm(\beta) \right|<x \leq p$.  By unique factorization it follows that the $\log q_i$ where $q_i$ are non-associated prime elements are linearly independent over $\Q$ and from this it follows that the numbers $\log(n+\alpha)$ where $P(n)$ is non-smooth are linearly independent over $\Q$. The dimension result follows from the existance of at least $(1-U-\epsilon)x$ non-smooth numbers.
\end{proof}

\section{Remaining problems and remarks}
There is one obvious problem
\begin{prob}
 Prove Theorem 4 and 5 without assuming unique factorization.
\end{prob}
This seems as an approachable problem. There are many other remaining problems in the field. For example, there are some results of Hmyrova \cite{Hmy1,Hmy2} that gives upper bound for $\Psi_P(x,y).$
Unfortunately she just proves non-trivial results when $y$ is small compared to $x$, and the region where it is neccessary to obtain results in order to apply Theorem 5 is when $x$ is the same order as $y$. However Timofeev \cite{Timofeev}\footnote{This author was not able to find Timofeev's paper so this information is from mathscinet. It is not quite clear if his work is relevent} has also done some work in this direction, and eventually it might be possible to prove upper bounds for $\Psi(x,y)$ which by means of Theorem 5 might give sharper results than Worley's result for large degree $d$.

Also while we are sure that Conjecture 1 and Conjecture 3 should be quite difficult, it might be that Conjecture 4 is more approachable.

\begin{ack}
 Thanks go to Roma Ka{\v{c}}inskait{\.e}    and  Art{\=u}ras Dubickas who brought the problem to my attention at Journ\'ees Arithm\'etiques, Vilnius, June 27th-July 1'st 2011.
\end{ack}

\bibliographystyle{plain}
\def\polhk#1{\setbox0=\hbox{#1}{\ooalign{\hidewidth
  \lower1.5ex\hbox{`}\hidewidth\crcr\unhbox0}}}

\end{document}